\documentclass[a4paper, 12pt]{amsart}
\usepackage{a4wide}

\usepackage{amsmath}
\usepackage{amsthm}
\usepackage{amsfonts}
\usepackage{stmaryrd}
\usepackage{amssymb}
\usepackage{sseq}
\usepackage{array}
\usepackage[all]{xy}
\usepackage{color}
\definecolor{darkblue}{rgb}{0,0,0.9}
\definecolor{lightblue}{rgb}{.5,.5,.9}
\usepackage[pdftex,breaklinks,colorlinks,filecolor=blue,linkcolor=blue,citecolor=blue,urlcolor=blue,hypertexnames=true,plainpages=false]{hyperref}

\usepackage{mathrsfs}

\usepackage{tikz}
\usepackage{etoolbox}
\usepackage{comment}
\usepackage{enumitem}

\numberwithin{equation}{section}

\newcommand\C{\mathbb{C}}

\newcommand\R{\mathbb{R}}

\newcommand\Q{\mathbb{Q}}
\newcommand\Z{\mathbb{Z}}

\theoremstyle{plain}
\newtheorem{theorem}{Theorem}[section]
\newtheorem{lemma}[theorem]{Lemma}
\newtheorem{corollary}[theorem]{Corollary}

\theoremstyle{definition}

\theoremstyle{remark}

\usepackage{color}

\advance \topmargin by -\headheight
\advance \topmargin by -\headsep
     
\evensidemargin \oddsidemargin
\makeatletter
\let\uppercasenonmath\@gobble
\makeatother

\title{On the non-existence of almost complex structures on sphere bundles over complex projective spaces}

\author[C. Liu]{Chengwan Liu}
\address{School of Mathematics and Statistics, Henan University, Kaifeng 475004, Henan, China}
\email{2539872535@qq.com}

\author[H. Yang]{Huijun Yang}
\address{School of Mathematics and Statistics, Henan University, Kaifeng 475004, Henan, China}
\email{yhj@amss.ac.cn}
\thanks{The second author is the corresponding author and is supported by the Natural Science Foundation of Henan (No. 242300421380).}

\subjclass[2020]{57R15; 55R40; 55R25}
\keywords{Almost complex structure; Sphere bundles; Complex projective space; Chern class; $p$-adic valuation}

\begin{document}
\maketitle

\begin{abstract}
We study the existence of almost complex structures on even-dimensional sphere bundles over complex projective spaces.
For bundles $\xi_{n,q}$ with fibre $S^{2q}$ over $\mathbb{C} P^n$, we establish a necessary condition: if $q\ge a(n)$ for an explicit function, then the total space $E_{n,q}$ does not admit an almost complex structure.
As an application, we analyse a concrete family associated with the canonical line bundle and obtain non‑existence criteria in terms of $p$-adic valuations; for $p=2$ this yields a simple numerical bound.
The proofs rely on Chern class computations and divisibility properties of characteristic classes.
The results leave open the question of existence in the range $4 \le q < a(n)$.
\end{abstract}

\section{Introduction}
\label{s:intro}

We begin by fixing some notation. For a CW-complex $X$, let $\mathrm{Vect}_{\mathbb{C}}(X)$ (resp. $\mathrm{Vect}_{\mathbb{R}}(X)$) denote the set of isomorphism classes of complex (resp. real) vector bundles over $X$, and let $\widetilde{K}(X)$ (resp. $\widetilde{KO}(X)$) be the reduced $KU$-group (resp. reduced $KO$-group) of $X$, i.e., the set of stable equivalence classes of complex (resp. real) vector bundles over $X$. For $\xi\in \mathrm{Vect}_{\mathbb{R}}(X)$ (resp. $\eta\in \mathrm{Vect}_{\mathbb{C}}(X)$), we denote by $\tilde{\xi}\in \widetilde{KO}(X)$ (resp. $\tilde{\eta}\in\widetilde{K}(X)$) its stable class (cf. Hilton \cite[p. 62]{hilton}). 
The real reduction homomorphism
\[
\tilde{r}_{X}\colon \widetilde{K}(X)\longrightarrow \widetilde{KO}(X)
\]
is induced by the real reduction map
\[
r_{X}\colon \mathrm{Vect}_{\mathbb{C}}(X)\longrightarrow \mathrm{Vect}_{\mathbb{R}}(X).
\]

Let $M$ be a $2n$-dimensional smooth manifold with tangent bundle $TM$. 
We say that $M$ admits an \emph{almost complex structure} (resp. \emph{stable almost complex structure}) if $TM$ lies in the image of $r_{M}$ (resp. $\widetilde{TM}$ lies in the image of $\tilde{r}_{M}$).

Determining which manifolds admit an almost complex structure is a classical problem in geometry and topology, with contributions by many authors; see, for instance, Wu \cite{wu}, Ehresmann \cite{eh}, Steenrod \cite{steenrod}, Massey \cite{ma}, Thomas \cite{thomascsv, thomascps}, Heaps \cite{heaps}, Gauduchon et al. \cite{gms}, Kotschick \cite{kots}, Tang \cite{tangg, tangd}, Yang \cite{young12, young15, young19}, and references therein.

Spheres and complex projective spaces are fundamental objects in this area. 
The existence of almost complex structures on these manifolds and on spaces constructed from them has been a coherent and progressive line of inquiry. 
It is a classical fact that among spheres, only $S^2$ and $S^6$ admit an almost complex structure (cf. \cite{bose}). 
A natural extension is to consider products of spheres. Calabi and Eckmann \cite{ce} showed that products of odd-dimensional spheres $S^{2p+1}\times S^{2q+1}$ admit complex, hence almost complex structures. 
For products of even-dimensional spheres, a complete characterization was given by Sutherland \cite{suld65} and Datta–Subramanian \cite{ds}: such a product admits an almost complex structure if and only if it is a product of copies of $S^2$, $S^6$, and $S^2\times S^4$. 
Sutherland further generalized these results to sphere bundles over spheres, providing necessary and sufficient conditions for the total space of an even-dimensional sphere bundle over an even-dimensional sphere to be almost complex.

Complex projective spaces $\C P^n$ themselves are complex manifolds, hence possess a canonical almost complex structure; their almost complex structures have been studied in detail, e.g., by Thomas \cite{thomascps}. 
This naturally leads to the question: do sphere bundles over these complex bases admit almost complex structures? 
Answering this question extends the research trajectory mentioned above.

Initial progress in this direction focused on product manifolds. Tang \cite{tangb} proved that $S^{2q}\times \C P^2$ admits an almost complex structure iff $q=0,1,3$, and $S^{2q}\times \C P^3$ does so iff $q=0,1,2,3$. 
Later, Chakraborty and Thakur \cite{cht} derived a general necessary condition for the product $S^{2q}\times M$ (with $M$ a closed oriented manifold) to be almost complex. 
A corollary of their work is that for $n>1$ and $n\not\equiv 3\pmod{4}$, the product $S^{2q}\times \C P^{n}$ admits an almost complex structure iff $q=1$ or $3$. 
The more general case of twisted sphere bundles over complex projective spaces (as opposed to trivial products) has been investigated by Ambika and Subhash \cite{as}, who obtained several partial results.

Building on this foundation, the present paper contributes to this line of research by studying necessary conditions for the existence of almost complex structures on such sphere bundles. 
Throughout this article, all sphere bundles with fibre $S^{2q}$ are assumed to have structure group $SO(2q+1)$.

For positive integers $n$ and $q$, let $\xi_{n,q}$ be a sphere bundle over $\C P^{n}$ with fibre $S^{2q}$, and denote its total space by $E_{n,q}$. 
From Yang \cite[Theorem 1.1]{young15} one can easily deduce that the associated real vector bundle of $\xi_{n,q}$ admits a stable complex structure (cf. \cite[Proposition 3.1]{as}); consequently $E_{n,q}$ admits a stable almost complex structure (cf. \cite[Theorem 1.1]{as}). 
However, not every such total space admits an (unstable) almost complex structure. 
Indeed, Ambika and Subhash \cite{as} proved that $E_{n,1}$ always admits an almost complex structure, while $E_{2n,2q}$ does not.

Our main results are stated as follows. For a positive integer $n$, define
\[
a(n) :=
\begin{cases}
n+3, & 1 \le n \le 2;\\
n+2, & 3 \le n \le 5;\\
n,   & n \ge 6.
\end{cases}
\]

\begin{theorem}
\label{thm:main}
Let $\xi_{n,q}$ be a sphere bundle over $\C P^{n}$ with fibre $S^{2q}$. 
If $q \ge a(n)$, then the total space $E_{n,q}$ does not admit an almost complex structure. 
\end{theorem}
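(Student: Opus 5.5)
The plan is to turn the existence of an almost complex structure on $E_{n,q}$ into an integrality statement in $K(E_{n,q})$, and then show that this statement contradicts the value of the Euler characteristic once $q\ge a(n)$.

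\textbf{Setup.} Let $\eta$ be the rank $2q+1$ real vector bundle associated with $\xi_{n,q}$ and $\pi\colon E_{n,q}\to \C P^n$ the projection. Then
\[
TE_{n,q}\oplus\underline{\R}\cong \pi^*T\C P^n\oplus\pi^*\eta .
\]
Since $q\ge a(n)\ge n$, the Leray--Hirsch theorem gives $H^*(E_{n,q};\Z)$ free over $H^*(\C P^n)=\Z[x]/(x^{n+1})$ on $1$ and a class $s\in H^{2q}$. For $q>n$ we get $s^2=0$ for degree reasons. In the borderline case $q=n$ (only relevant for $n\ge 6$) I would first arrange $s^2=0$ by adjusting $s$ by a multiple of $x^q$. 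The same splitting gives
\[
K(E_{n,q})\cong K(\C P^n)\oplus K(\C P^n)\cdot\beta ,
\]
where $\beta$ restricts to a generator of $\widetilde K(S^{2q})$. I would normalize $\beta$ so that $\mathrm{ch}(\beta)=s$ exactly, correcting it by pullbacks from the base if necessary.

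\textbf{Constraints from an almost complex structure.} Suppose $TE_{n,q}=r(\omega)$ for a complex bundle $\omega$ of rank $n+q$. Write $\omega=\pi^*a+\beta\cdot\pi^*b$ in $K(E_{n,q})$, with $b=f(H-1)$ for an integer polynomial $f$ and $H$ the Hopf bundle. This gives two constraints.
\begin{enumerate}
\item[(i)] \emph{Real reduction.} We have $\omega\otimes\C=\omega\oplus\bar\omega$ and $\bar\beta=(-1)^q\beta$. Comparing with the complexification of $\pi^*T\C P^n\oplus\pi^*\eta$, the $s$-part of $\mathrm{ch}(TE\otimes\C)$ equals $s\cdot\mathrm{ch}\big(b+(-1)^q\bar b\big)$. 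Because $\eta$ is pulled back from the base, it has no $s$-component, so $b+(-1)^q\bar b=0$ in $K(\C P^n)$. This forces $b$ to be (anti)symmetric under $H\mapsto H^{-1}$.
\item[(ii)] \emph{Top Chern class.} We need $c_{n+q}(\omega)=e(TE_{n,q})=\chi(E_{n,q})\,[E]=2(n+1)\,s x^n$.
\end{enumerate}

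\textbf{Key step: computing $c_{n+q}(\omega)$.} Since $s^2=0$, the $s$-part of $c(\omega)$ is linear in the $s$-part of $\mathrm{ch}(\omega)$. More precisely, $c(\omega)=\pi^*c(a)\cdot\big(1+s\cdot\Phi(b)\big)$, where $\Phi$ is obtained from the Newton identities. A component of Chern-character degree $q+j$ enters the Chern class with a factor $\pm(q+j-1)!$. Hence the coefficient of $s x^n$ in $c_{n+q}(\omega)$ is an explicit integer linear combination of the coefficients of $\mathrm{ch}(b)=f(e^x-1)$ and $\pi^*c(a)$. The base part $c(a)$ is itself constrained: $\pi^*a$ restricted to the zero-section direction is stably $T\C P^n\oplus\eta$. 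Using $(q+j-1)!\,\mathrm{ch}_j$-type integrality, I would show that this coefficient is divisible by a number of size roughly $(q-1)!/(\text{denominators from }\mathrm{ch}(H^k)\text{ up to degree }n)$. The relevant denominators are at most those of $x^j/j!$ with $j\le n$. So for $q\ge n$, primes $p$ with $n<p\le q$, or high powers of $2$ coming from $(q-1)!$, divide the coefficient. I would then compare with $2(n+1)$ via a $p$-adic valuation argument, taking $p=2$ or a prime $p\mid(q-1)!$ not dividing $2(n+1)$ times the denominators, and using the symmetry from (i) to kill lower-order terms.

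\textbf{Main obstacle.} The hardest part is this final arithmetic: proving that $2(n+1)$ is never attained when $q\ge a(n)$. The reason is that cancellations between different $j$ (with $0\le j\le n$) could lower the divisibility. I expect the generic range $n\ge 6$, $q\ge n$ to follow from a clean $2$-adic estimate, of the form $v_2\big((q-1)!\big)-v_2\big(n!\big)>v_2\big(2(n+1)\big)$ after accounting for the symmetry condition. The small cases $1\le n\le 5$ would then need separate, explicit checks, and this is where the shifted thresholds $n+3$ and $n+2$ in $a(n)$ come from.
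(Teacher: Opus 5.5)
Your architecture matches the paper's: split $K(E_{n,q})$ into a part pulled back from $\C P^n$ and a fibre part, get the fibre part's Chern classes from Newton's identities, and compare $p$-adically with $\chi(E_{n,q})=2(n+1)$. The gap is the key step, which you leave at ``I would show''; it does not go through as sketched.

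First, you cannot normalize $\beta$ so that $\operatorname{ch}(\beta)=s$. Write $\operatorname{ch}(\beta)=s\cdot\pi^*\alpha+\pi^*\alpha'$ with $\alpha=1+\alpha_1x+\cdots\in H^*(\C P^n;\Q)$. Adding pullbacks changes only $\alpha'$. Replacing $\beta$ by $\beta\cdot\pi^*u$ multiplies $\alpha$ by $\operatorname{ch}(u)$, an integral polynomial in $e^x-1$, and this cannot clear the denominators of $\alpha$.

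For instance, take $\eta=\gamma_{n\R}\oplus\epsilon^{2q-1}$ and the natural $\beta$, a $K$-theory Thom class of $\gamma_n\oplus\C^{q-1}$. Then $\alpha=(1-e^{-y})/y=\sum_t(-1)^ty^t/(t+1)!$ with $y=c_1(\gamma_n)$, and $\alpha^{-1}=1+\tfrac12(e^y-1)+\cdots$ is not in $\operatorname{ch}K(\C P^n)$. So $\alpha$ carries Todd/$\hat A$-type denominators, and your claim that only the denominators of $x^j/j!$ occur is unfounded.

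What you need is a denominator bound for $\operatorname{ch}_{q+t}$ valid for every element of $\widetilde K(E,M)$ at once. These elements are trivial on the $(2q-1)$-skeleton (the paper uses a section, which exists for $q\ge n$). The required bound is then exactly Adams' theorem, Lemma~\ref{lem:adams}: $\phi(t)\operatorname{ch}_{q+t}$ is integral.

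Two smaller points:
\begin{itemize}
\item For $q=n$ you cannot always arrange $s^2=0$. Replacing $s$ by $\pm s+mx^n$ changes the coefficient $e$ in $s^2=e\,sx^n$ by an even number. But $e$ is odd exactly when $w_{2n}(\eta)\neq0$, which happens e.g.\ for $\eta=n\gamma_{n\R}\oplus\epsilon$.
\item Constraint (i) assumes $\bar\beta=(-1)^q\beta$ globally, which is unjustified; (i) is not needed at all.
\end{itemize}

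Second, your arithmetic pairs the wrong quantities. You note that $\operatorname{ch}_{q+t}$ enters $c_{q+t}$ with the factor $(q+t-1)!$, but then weigh only $(q-1)!$ against the denominators up to degree $n$. The resulting test $v_2((q-1)!)-v_2(n!)>v_2(2(n+1))$ is false at $q=n$, where the left side is $-v_2(n)\le 0$. So it cannot give $a(n)=n$.

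The factorial must be matched with the denominator of the same degree:
\begin{itemize}
\item By \eqref{eq:sqp}, $s_{q+t}/(q+t)=((q+t-1)!/\phi(t))\,z_{q+t}$ with $z_{q+t}$ integral.
\item Lemma~\ref{lem:vp} (Legendre's formula plus the digit-sum bound of Lemma~\ref{lem:sp}) gives $v_p((q+t-1)!/\phi(t))\ge l$ for all $0\le t\le n$ once $n+q<p^{\lfloor (q-1)/(p-1)\rfloor-l+1}$.
\item Products of several $s_i$, which occur when $q=n$, are absorbed with the factor $1/(m_q+\cdots+m_k)!$ as in Theorem~\ref{thm:dchern}(ii).
\end{itemize}
With $l=v_p(n+1)+\delta_p+1$, every term of $c_{n+q}(\tilde\eta)=\sum_k c_k(\eta_1)c_{n+q-k}(\eta_2)$ is divisible by $p^l$, while $v_p(\chi(E_{n,q}))=l-1$.

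Finally, $p=2$ alone cannot cover all $n\ge 6$, contrary to your expectation. For $n=q=7$ one can have $c_7(\eta_1)=\pm 6!\,s$, and $v_2(6!)=4=v_2(\chi(E_{7,7}))$. This is why the paper switches to $p=3$ for $n=1,3,7$.
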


Recall that among spheres, only $S^2$ and $S^6$ admit an almost complex structure. 
Furthermore, Sutherland \cite{suld65} proved that $E_{1,q}$ (i.e., over $\C P^1 \cong S^2$) admits an almost complex structure iff $q\le 3$; 
these facts suggest that an almost complex structure may possibly exist on $E_{n,q}$ when $q\le 3$. 
Comparing with Theorem~\ref{thm:main}, it is natural to ask: does $E_{n,q}$ admit an almost complex structure in the range $4\le q < a(n)$?

To illustrate our methods, we consider a specific family of sphere bundles. 
Let $\gamma_n$ denote the canonical complex line bundle over $\C P^{n}$. 
For any positive integer $q$, let $q\gamma_{n\R}\oplus \epsilon$ be the Whitney sum of $q$ copies of the underlying real vector bundle $\gamma_{n\R}$ of $\gamma_n$ and the trivial real line bundle $\epsilon$. 
Denote by $\xi(\gamma_n,q)$ the associated sphere bundle of $q\gamma_{n\R}\oplus \epsilon$; then $\xi(\gamma_n,q)$ is an $S^{2q}$-bundle over $\C P^n$ that admits a cross section by construction.

For a prime $p$, let $v_p(n+1)$ be the $p$-adic valuation of $n+1$ (the highest power of $p$ dividing $n+1$; see Section~\ref{s:dpcc}). 
Set $\delta_p := v_p(2)$, so $\delta_2=1$ and $\delta_p=0$ for odd $p$. 
As an immediate consequence of Theorem~\ref{thm:som} in Section~\ref{s:pf}, we obtain:

\begin{theorem}
\label{thm:exam}
If there exists a prime $p$ such that
\[
p^{\lfloor \frac{q-1}{p-1} \rfloor - v_p(n+1) - \delta_p} - q > n
\quad(\text{or } \ge n \text{ if } p-1 \nmid q-1),
\]
then the total space $E(\gamma_n,q)$ of $\xi(\gamma_n,q)$ does not admit an almost complex structure.
\qed
\end{theorem}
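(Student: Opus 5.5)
The plan is to reduce the question to a divisibility condition on a single Chern class, and then rule that condition out by a $p$-adic estimate.

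\emph{Tangent bundle.} Let $\pi\colon E=E(\gamma_n,q)\to\C P^n$ be the projection, and write $\eta=q\gamma_{n\R}\oplus\epsilon$. There is a standard splitting
\[
TE\oplus\epsilon\cong \pi^*T\C P^n\oplus\pi^*\eta .
\]
We also need the cohomology of $E$. Since $\xi(\gamma_n,q)$ has a section and its Euler class vanishes (the fibre is even dimensional and $\eta$ contains $\epsilon$), the Leray--Hirsch theorem gives
\[
H^*(E)=\Z[x,y]/(x^{n+1},\,y^2-\alpha y x^q).
\]
Here $x=\pi^*$ of the hyperplane class and $y\in H^{2q}$ restricts to the fibre generator. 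The coefficient $\alpha$ can be normalized using the section, for instance to make $y^2=0$ or $y^2=\pm x^q y$.

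\emph{Necessary condition.} Suppose $E$ is almost complex, with complex tangent bundle $\omega$. Its total Chern class is $c(\omega)=(1+x)^{n+1}(1+\lambda x)^q\,(1+\cdots)$, up to a class $u$ that is a multiple of $y$ (up to units). The top Chern class must equal the Euler characteristic:
\[
c_{n+q}(\omega)[E]=\chi(E)=2(n+1).
\]
We would extract the coefficient of $x^n y$ in $c_{n+q}(\omega)$. Since the stable class of $\omega$ must realize $TE$ stably, the possible stable complex structures are constrained. By the argument behind Theorem~\ref{thm:som}, this forces an integrality condition: some combination $\sum_j\binom{q}{j}\cdot(\ldots)$, essentially $c_q$ of a bundle of rank $\le n$ twisted by $\gamma$, must equal $(n+1)$ times a unit (or twice that).

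\emph{The $p$-adic step, which I expect to be the main obstacle.} The key observation is that the $x^n y$-coefficient involves the $q$-th Chern class of a complex bundle that stably represents $\eta$. Writing $\eta$ stably as $r(\zeta)$ with $\zeta\in\widetilde K(\C P^n)$, Adams-type integrality for characteristic classes of vector bundles over $S^{2q}$-like filtrations applies: the class $c_q$ restricted to the fibre sphere $S^{2q}$ must be divisible by $(q-1)!$, and $v_p\big((q-1)!\big)\ge\lfloor (q-1)/(p-1)\rfloor$ in the relevant range. Meanwhile the Euler-characteristic condition requires the relevant coefficient to have $p$-adic valuation $v_p(n+1)+\delta_p$, with $\delta_p$ accounting for the factor $2$ in $\chi$. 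The hypothesis
\[
p^{\lfloor (q-1)/(p-1)\rfloor-v_p(n+1)-\delta_p}-q>n
\]
should be exactly the statement that the multiple needed to correct the divisibility pushes the rank needed beyond the available one, $n+q$, so no such $\omega$ exists. The variant with $\ge n$ when $p-1\nmid q-1$ comes from the strict inequality in the floor estimate.

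\emph{Order of work.} First, compute $c(\omega)$ in terms of the parameters of the stable complex structures. Second, reduce to Theorem~\ref{thm:som}, which I assume gives the general criterion in the form of a valuation inequality. Third, plug in $\eta=q\gamma_{n\R}\oplus\epsilon$, for which the Chern character is explicit: $\zeta=q\gamma$ or its conjugate mix, so the valuations are computable from $\binom{q}{j}$ and $(1\pm x)^q$. The delicate point is matching Theorem~\ref{thm:som}'s hypotheses exactly to this explicit bundle and verifying the valuation bound.
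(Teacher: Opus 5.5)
Your proposal has a genuine gap, located at the step you flag as the main obstacle. First, the inequality $v_p((q-1)!)\ge\lfloor (q-1)/(p-1)\rfloor$ that you rely on is false. By Legendre's formula, $v_p((q-1)!)=(q-1-S_p(q-1))/(p-1)<(q-1)/(p-1)$, so for $p=2$ it fails for every $q\ge 2$ (e.g.\ $v_2(3!)=1<3$ for $q=4$). Second, and more fundamentally, divisibility of $c_q$ restricted to the fibre $S^{2q}$ (Bott's factor $(q-1)!$) cannot control the top Chern class. Write an arbitrary stable complex structure as $\tilde\eta=\eta_1+\eta_2$, with $\eta_2=\pi^*s^*\tilde\eta$ and $\eta_1$ coming from $\widetilde K(E,\C P^n)$. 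Then $c_{n+q}(\tilde\eta)=\sum_{k=q}^{n+q}c_k(\eta_1)\,c_{n+q-k}(\eta_2)$, and the terms with $k>q$ involve Chern classes of $\eta_1$ that the fibre does not see. Your ansatz $c(\omega)=(1+x)^{n+1}(1+\lambda x)^q(\cdots)$ is also unjustified. A stable almost complex structure is an arbitrary lift of $\widetilde{TE}$ under $\tilde r$, and you have no control over its Chern classes beyond what integrality theorems give.

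The missing idea is Adams' integrality applied to \emph{all} $c_k(\eta_1)$ with $q\le k\le n+q$, which is what Theorem~\ref{thm:dchern}(ii), and hence Theorem~\ref{thm:som}(b), packages. You mention reducing to Theorem~\ref{thm:som}, and that alone is the paper's entire proof, with no computation involving $q\gamma_{n\R}$. Take $M=\C P^n$, so $v_p(\chi(M))=v_p(n+1)$. The trivial summand $\epsilon$ gives a cross section of $\xi(\gamma_n,q)$. The hypothesis of Theorem~\ref{thm:exam} is then verbatim condition (b). Unwound, set $l=v_p(n+1)+\delta_p+1$; the hypothesis says $p^{\lfloor (q-1)/(p-1)\rfloor-l+1}>n+q$ (or $\ge n+q$ when $p-1\nmid q-1$). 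That is, the top degree $n+q$ lies in the window where every $c_k(\eta_1)$ is divisible by $p^l$ modulo torsion. Hence $p^l$ divides $c_{n+q}(\tilde\eta)$, while $v_p(\chi(E))=v_p(2(n+1))=l-1$, contradicting Lemma~\ref{lem:cne}. So the hypothesis is not about a ``rank needed'' exceeding $n+q$, and the particular bundle $q\gamma_{n\R}$ plays no role beyond supplying a section.
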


Specializing to $p=2$ and writing $\nu = v_2(n+1)$, we have
\[
p^{\lfloor \frac{q-1}{p-1} \rfloor - v_p(n+1) - \delta_p} - q = 2^{q - \nu - 2} - q.
\]
Let $\log_2$ denote the base‑2 logarithm. A simple calculation shows that if
\[
q \ge \lfloor \log_2( \tfrac{n+1}{2^{\nu}}) \rfloor + 2\nu + 4,
\]
then $2^{q-\nu-2} - q > n$. Hence:

\begin{corollary}
If $q \ge \lfloor \log_2( \frac{n+1}{2^{\nu}}) \rfloor + 2\nu + 4$, then $E(\gamma_n,q)$ does not admit an almost complex structure.
\qed
\end{corollary}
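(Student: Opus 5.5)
The plan is to specialise Theorem~\ref{thm:exam} to $p=2$ and reduce the claim to an elementary inequality. For $p=2$ we have $p-1=1$, which always divides $q-1$. So we are in the strict case, and it suffices to show
\[
f(q):=2^{q-\nu-2}-q>n \qquad\text{whenever } q\ge \lfloor \log_2(\tfrac{n+1}{2^{\nu}})\rfloor+2\nu+4 .
\]
Write $n+1=2^{\nu}m$ with $m$ odd, and set $k=\lfloor\log_2 m\rfloor$. Since $m$ is odd, $m\le 2^{k+1}-1$, and therefore $n\le 2^{\nu}(2^{k+1}-1)-1$.

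\textbf{Step 1: reduce to the smallest $q$.} We have $f(q+1)-f(q)=2^{q-\nu-2}-1\ge 1$ as soon as $q\ge \nu+3$. Our hypothesis gives $q\ge k+2\nu+4\ge\nu+4$, so $f$ is increasing on the relevant range. It therefore suffices to treat $q_0=k+2\nu+4$, where
\[
f(q_0)=2^{k+\nu+2}-k-2\nu-4 .
\]

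\textbf{Step 2: the inequality at $q_0$.} By the bound on $n$ from the first paragraph, it is enough to prove
\[
2^{k+\nu+2}-k-2\nu-4>2^{k+\nu+1}-2^{\nu}-1,
\]
which is equivalent to
\[
2^{k+\nu+1}+2^{\nu}\ge k+2\nu+3 .
\]
Put $s=k+\nu$. Since $n\ge1$, we have $n+1\ge2$, so $s\ge1$: if $\nu=0$ then $m\ge3$ and hence $k\ge1$.

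\textbf{Step 3: case check.} The case split is driven by the tight small cases.
\begin{itemize}
\item If $s\ge2$, then using $\nu\le s$ and $2^{\nu}\ge 1$, the left side is at least $2^{s+1}+1$ and the right side is at most $2s+3$. The inequality $2^{s+1}+1\ge 2s+3$ holds for all $s\ge1$.
\item If $s=1$ and $(\nu,k)=(1,0)$, the inequality reads $4+2\ge5$, which holds.
\item If $s=1$ and $(\nu,k)=(0,1)$, the inequality reads $4+1\ge4$, which holds.
\end{itemize}
Hence $f(q)>n$ for all $q$ in the stated range, and Theorem~\ref{thm:exam} gives the conclusion.

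The only delicate point is this small-case bookkeeping. The cruder bound $n+1<2^{k+\nu+1}$ fails when $n=1$, and that failure is why I keep the sharper estimate $m\le 2^{k+1}-1$.
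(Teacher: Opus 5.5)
Your argument is correct and is exactly the ``simple calculation'' the paper leaves implicit: specialise Theorem~\ref{thm:exam} to $p=2$, use that $2^{q-\nu-2}-q$ is increasing in $q$, and check the bound at the smallest admissible $q$ via $n\le 2^{k+\nu+1}-2^{\nu}-1$. One small slip needs fixing. Step~2 rearranges to the \emph{strict} inequality $2^{k+\nu+1}+2^{\nu}>k+2\nu+3$, whereas your non-strict version would only give $f(q_0)\ge n$, which is not enough since $p-1=1$ divides $q-1$. However, each of your case checks already holds strictly ($2^{s+1}+1>2s+3$ for $s\ge 2$, $6>5$, $5>4$), so the proof goes through once this is corrected.
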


The paper is organized as follows.
In Section~\ref{s:dpcc}, we investigate divisibility properties of Chern classes of complex vector bundles. 
Based on these results, Section~\ref{s:pf} derives conditions under which sphere bundles over manifolds fail to admit an almost complex structure, and provides the proof of Theorem~\ref{thm:main}.

\section{Divisibility properties of Chern classes}
\label{s:dpcc}
In this section we develop the technical machinery needed for the proof of Theorem \ref{thm:main}.
Let $\eta$ be a complex vector bundle over a CW‑complex $X$ and assume that $\eta$ is trivial over the $(2q-1)$-skeleton of $X$.
We investigate divisibility properties of its Chern classes; the main results is Theorem \ref{thm:dchern}, which will be crucial in Section \ref{s:pf}.


We first establish the necessary algebraic background. 
Let $t_1,\dots,t_n$ be indeterminates. 
For $k\le n$, denote by $\sigma_k=\sigma_k(t_1,\dots,t_n)$ the $k$‑th elementary symmetric polynomial, and for $k\ge 1$ let $s_k$ be the unique polynomial satisfying 
\[
s_k(\sigma_1,\dots,\sigma_k)=\sum_{i=1}^n t_i^{\,k}.
\]
The elementary polynomials and power sums are related by Newton’s formulas (cf. \cite[Problem 16-A, p. 195]{ms74b} or \cite[(2.11$'$), p. 23]{mdn})
\begin{equation}
\label{eq:newton}
s_k - \sigma_1 s_{k-1} + \sigma_2 s_{k-2} + \cdots + (-1)^{k-1}\sigma_{k-1}s_1 + (-1)^k k\sigma_k = 0.
\end{equation}
These relations allow us to express $\sigma_k$ in terms of $s_1,\dots,s_k$ with rational coefficients (cf. \cite[(2.14$'$), p. 25]{mdn}):
\begin{equation}
\label{eq:uchern}
(-1)^{k}\sigma_{k} = \sum_{\substack{m_1+2m_2+\cdots+km_k=k\\ m_i\ge 0}}
\frac{(-1)^{m_1+m_2+\cdots+m_k}}{m_1!\,m_2!\,\cdots\, m_k!}
\prod_{i=1}^{k}\left(\frac{s_i}{i}\right)^{m_i}.
\end{equation}

For a complex vector bundle $\eta$ over $X$, we write $s_k(\eta)$ for $s_k(c_1(\eta),\dots,c_k(\eta))$. 
The Chern character of $\eta$ is defined as the formal sum
\[
\operatorname{ch}(\eta) := \dim\eta + \sum_{k=1}^{\infty}\frac{s_k(\eta)}{k!}\in H^{*}(X;\Q).
\]
A cohomology class $z\in H^{k}(X;\Q)$ is called \emph{integral} if it lies in the image of the natural map $H^{k}(X;\Z)\to H^{k}(X;\Q)$. 
Let $\phi(t)$ be the numerical function
\[
\phi(t)=\prod_{p\text{ prime}} p^{\lfloor t/(p-1)\rfloor},
\]
where $\lfloor\cdot\rfloor$ denotes the floor function. 
Adams \cite{ad61} proved the following integrality theorem for Chern characters.

\begin{lemma}[Adams \cite{ad61}]
\label{lem:adams}
Let $X$ be a CW‑complex and $\eta$ a complex vector bundle over $X$. 
If $\eta$ is trivial over the $(2q-1)$-skeleton of $X$, then for every integer $t\ge 0$ the class $\dfrac{\phi(t)}{(q+t)!}\,s_{q+t}(\eta)$ is integral.
\end{lemma}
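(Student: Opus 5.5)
The plan is to recast the statement in $K$-theory and prove it one prime at a time, using Adams operations to raise the filtration step by step. Write $x=\tilde\eta\in\widetilde K(X)$ and $\operatorname{ch}_m(x)=s_m(\eta)/m!$. The statement is then that $\phi(t)\operatorname{ch}_{q+t}(x)$ is integral for every $t\ge 0$.

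\emph{Reductions.} Integrality can be tested on finite subcomplexes, so I may assume $X$ is finite. Since $\eta$ is trivial on the $(2q-1)$-skeleton, $x$ is pulled back from $X/X^{2q-1}$ along the collapse map. It therefore suffices to treat an element $x$ lying in the skeletal filtration $F^{2q}\widetilde K(X)$. The claim is local at each prime: it suffices to show that $p^{\lfloor t/(p-1)\rfloor}\operatorname{ch}_{q+t}(x)$ lies in the image of $H^{2q+2t}(X;\Z_{(p)})$ for every prime $p$.

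\emph{Base case $t=0$.} Newton's formula \eqref{eq:newton}, together with $c_i(\eta)=0$ for $i<q$, gives $s_q=(-1)^{q-1}q\,c_q$. Hence $\operatorname{ch}_q(x)=\pm c_q/(q-1)!$, so I must show that $c_q(\eta)$ is divisible by $(q-1)!$ in the integral cohomology of $X/X^{2q-1}$. I would argue as follows. In degree $2q$, the first possibly nonzero row of the Atiyah--Hirzebruch spectral sequence, the element $x$ is represented by a class $\hat x$ in $H^{2q}$. The map $\eta\mapsto c_q(\eta)$ on $F^{2q}$ factors as $\pm(q-1)!$ times this edge class. This is Bott's theorem on $S^{2q}$, namely $c_q=\pm(q-1)!\cdot$generator, transported via naturality to wedges of $2q$-spheres and then to $X^{2q}/X^{2q-1}$. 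The remaining issue is the extension over higher cells, which I handle by restricting to $X^{2q+1}$, where $H^{2q}$ injects.

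\emph{Inductive step via $\psi^k$.} Fix $p$ and an integer $k$ prime to $p$, chosen to be a primitive root modulo $p^2$ when $p$ is odd and $k=3$ when $p=2$. The operation $\psi^k$ acts on $\operatorname{ch}_m$ by multiplication by $k^m$ and preserves filtration. Because $\psi^k$ acts by $k^q$ on the associated graded piece, $y=(\psi^k-k^q)x$ lies in $F^{2q+2}$ after inverting nothing prime to $p$. Applying the induction hypothesis to $y$ at level $q+1$ and $t-1$ gives
\[
p^{\lfloor (t-1)/(p-1)\rfloor}\,(k^{q+t}-k^q)\operatorname{ch}_{q+t}(x)\ \text{ is } p\text{-integral}.
\]
This is a statement about elements rather than bundles, so the induction is really run over the filtration $F^{2q}$ of $K(X)\otimes\Z_{(p)}$, and $\psi^k$ must be localized, which is legitimate since $k$ is prime to $p$. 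Since $k^{q+t}-k^q=k^q(k^t-1)$, a $p$-adic valuation computation finishes the step. When $p-1\nmid t$, $k^t-1$ is a $p$-adic unit and the exponent $\lfloor (t-1)/(p-1)\rfloor=\lfloor t/(p-1)\rfloor$ suffices. When $p-1\mid t$, one has $v_p(k^t-1)=1+v_p(t)$ (with the usual adjustment at $p=2$), and this excess must be absorbed.

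\emph{Main obstacle.} I expect the hard point to be this last bookkeeping. A naive one-step induction loses a factor $p^{v_p(t)}$ exactly when $(p-1)\mid t$. To recover it I would iterate instead: apply $\prod_{j=0}^{t-1}(\psi^k-k^{q+j})$, which sends $x$ into $F^{2q+2t}$, where $\operatorname{ch}_{q+t}$ is integral by the base case. Then compare $\prod_j(k^{q+t}-k^{q+j})$ with $p^{\lfloor t/(p-1)\rfloor}$ by varying $k$ over several units and taking a $\gcd$. This is Adams' original device, and its $p$-adic content is precisely the exponent $\lfloor t/(p-1)\rfloor$.
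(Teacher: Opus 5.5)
The paper does not prove this lemma; it only quotes Adams, so your outline has to stand on its own. Several parts are fine: the reductions, the base case, and the fact that $\psi^k-k^q$ carries $F^{2q}$ into $F^{2q+2}$. The base case needs one added remark: the class $w$ with $(q-1)!\,w=c_q$ on $X^{2q}/X^{2q-1}$ extends over $X$, because its cellular coboundary is killed by $(q-1)!$ in a torsion-free group.

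The gap is at the step you yourself call the main obstacle, and your proposed fix does not close it. Applying $\prod_{j=0}^{t-1}(\psi^k-k^{q+j})$ is just the one-step induction unrolled. It multiplies $\operatorname{ch}_{q+t}$ by
\[
\prod_{j=0}^{t-1}\bigl(k^{q+t}-k^{q+j}\bigr)=k^{tq+t(t-1)/2}\prod_{i=1}^{t}\bigl(k^{i}-1\bigr).
\]
Take $p$ odd and write $m=\lfloor t/(p-1)\rfloor$. For \emph{every} $k$ prime to $p$, one has $v_p(k^i-1)\ge 1+v_p(i)$ whenever $(p-1)\mid i$. So this factor has valuation at least $\sum_{j=1}^{m}\bigl(1+v_p(j)\bigr)=m+v_p(m!)$. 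For $p\mid k$ the valuation is at least $tq+t(t-1)/2$, which is no smaller. A lower bound that holds uniformly in $k$ survives any gcd over $k$. Hence the $p$-adic content of your device is $m+v_p(m!)$, not $m$, and the two differ as soon as $t\ge p(p-1)$. For example, with $p=3$ and $t=6$, every such factor is divisible by $3^4$, while the lemma asserts that $3^3$ suffices.

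At $p=2$ the loss is worse, and the case $p-1\nmid t$ in your one-step analysis never occurs. Since $v_2(k^2-1)\ge 3$ for every odd $k$, already for $t=2$ the factor $k^{2q+1}(k-1)(k^2-1)$ is divisible by $8$ for every integer $k$, and by $16$ if $q\ge 2$. The lemma, by contrast, says $\phi(2)=12$ suffices. That $2$-part is really attained: on $\C P^\infty$, with $L$ the canonical line bundle and $u=c_1(L)$, the class $(L-1)^2$ vanishes on the $3$-skeleton and $\operatorname{ch}_4\bigl((L-1)^2\bigr)=\tfrac{7}{12}u^4$.

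So your argument proves only a strictly weaker integrality statement, with $p^{m+v_p(m!)}$ (and a larger power of $2$) in place of $p^{\lfloor t/(p-1)\rfloor}$. It therefore cannot be the whole of Adams' argument. The missing factor is extra divisibility of the image of $\prod_j(\psi^k-k^{q+j})x$ in $H^{2q+2t}(X;\Z)$ modulo torsion, and this is invisible to the eigenvalues of $\psi^k$ on the graded pieces. That divisibility is essentially equivalent to the lemma itself. It must come from a genuinely different input, such as Adams' own argument or a determination of the integral lattice in the rational homology of connective $K$-theory, not from a clever choice of units $k$.
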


To work with the prime factors appearing in $\phi(t)$, we need some elementary number theory. 
For a prime $p$ and a non‑negative integer $n$, let $S_p(n)$ be the sum of the digits in the base‑$p$ expansion of $n$; clearly $S_p(n)\ge 1$. 
Since $n$ has at most $\lfloor\log_p n\rfloor+1$ digits and each digit is at most $p-1$, we have the following bound.

\begin{lemma}
\label{lem:sp}
For any non‑negative integer $n$,
\[
S_p(n)\le (p-1)\bigl(\lfloor\log_p n\rfloor+1\bigr),
\]
and equality holds iff $n=p^{\lfloor\log_p n\rfloor}-1$.
\end{lemma}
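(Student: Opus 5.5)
The plan is to write out the base-$p$ expansion of $n$ and bound it digit by digit. Let $n\ge 1$; the case $n=0$ has to be set aside or handled by convention, since $\log_p 0$ is undefined and $S_p(0)=0$. Write
\[
n=\sum_{i=0}^{L} a_i p^{i},\qquad 0\le a_i\le p-1,\quad a_L\ge 1 .
\]

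First I would pin down $L$. Since $a_L\ge 1$ we have $p^{L}\le n$. Since every digit is at most $p-1$,
\[
n\le (p-1)\sum_{i=0}^{L}p^i = p^{L+1}-1 < p^{L+1}.
\]
Taking $\log_p$ of $p^L\le n<p^{L+1}$ gives $L=\lfloor \log_p n\rfloor$. So $n$ has exactly $\lfloor\log_p n\rfloor+1$ digits.

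The inequality is then immediate:
\[
S_p(n)=\sum_{i=0}^{L}a_i\le (p-1)(L+1)=(p-1)\bigl(\lfloor\log_p n\rfloor+1\bigr).
\]

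For the equality case, note that the sum of the digits equals its maximum $(p-1)(L+1)$ exactly when each $a_i=p-1$, because every term satisfies $a_i\le p-1$. This holds precisely when
\[
n=(p-1)\sum_{i=0}^{L}p^i=p^{L+1}-1.
\]
So the equality condition should read $n=p^{\lfloor\log_p n\rfloor+1}-1$, i.e.\ $n$ is one less than a power of $p$. The statement as printed, with exponent $\lfloor\log_p n\rfloor$, cannot hold for any $n\ge1$, since $p^{\lfloor\log_p n\rfloor}\le n$. I would record this as an off-by-one correction.

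The only real obstacle is therefore bookkeeping rather than mathematics: identifying the digit count correctly, fixing the off-by-one exponent in the equality condition, and stating the convention for $n=0$. Both the inequality and the equality criterion reduce to the single observation that every digit is at most $p-1$.
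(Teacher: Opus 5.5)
Your argument is correct and is essentially the paper's own proof, which simply observes that $n$ has at most $\lfloor\log_p n\rfloor+1$ base-$p$ digits, each at most $p-1$; you have written it out in more detail. Your off-by-one correction is also right: the equality case must read $n=p^{\lfloor\log_p n\rfloor+1}-1$ (the printed version already fails at $n=p-1$), and this corrected form is exactly what the proof of Lemma~\ref{lem:vp}(2) relies on.
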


For a nonzero rational number $r$, denote by $v_p(r)$ its $p$‑adic valuation (see e.g. \cite[p.2]{kob}); thus $r=p^{\rho}a/b$ with $\rho,a,b\in\Z$ and $a,b$ coprime to $p$, and $v_p(r)=\rho$. 
Valuations satisfy the usual rules
\begin{align}
v_p(r_1r_2)&=v_p(r_1)+v_p(r_2),\label{eq:vpt}\\
v_p(r_1/r_2)&=v_p(r_1)-v_p(r_2).\label{eq:vpd}
\end{align}
From the definition of $\phi(t)$ one obtains immediately
\begin{equation}
\label{eq:vpphi}
v_p\bigl(\phi(t)\bigr)=\Bigl\lfloor\frac{t}{p-1}\Bigr\rfloor .
\end{equation}
Legendre’s formula (cf. \cite[p.7]{kob}) gives the $p$-adic valuation of factorials:
\begin{equation}
\label{eq:lgd}
v_p(n!)=\sum_{k=1}^{\infty}\Bigl\lfloor\frac{n}{p^k}\Bigr\rfloor
      =\frac{n-S_p(n)}{p-1}.
\end{equation}

The next lemma provides estimates that will be crucial later.

\begin{lemma}
\label{lem:vp}
Let $p$ be a prime and let $q,l$ be positive integers.
\begin{enumerate}[leftmargin=*,topsep=2pt,itemsep=5pt,parsep=2pt]
\item[(1)] For every integer $t$ with $0\le t<p-1$,
\[
v_p\!\left(\frac{(q+t-1)!}{\phi(t)}\right)\ge v_p\bigl((q-1)!\bigr).
\]
\item[(2)] Suppose $p^{\lfloor\frac{q-1}{p-1}\rfloor -l+1}>q$. 
Then for every integer $t$ with
$0\le t < p^{\lfloor\frac{q-1}{p-1}\rfloor -l+1}-q$
(or $0\le t\le p^{\lfloor\frac{q-1}{p-1}\rfloor -l+1}-q$ if $p-1\nmid q-1$),
\[
v_p\!\left(\frac{(q+t-1)!}{\phi(t)}\right)\ge l .
\]
\end{enumerate}
\end{lemma}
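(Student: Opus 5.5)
For part (1) the plan is to use \eqref{eq:vpphi} directly. If $0\le t<p-1$, then $\lfloor t/(p-1)\rfloor=0$, so $v_p(\phi(t))=0$. By \eqref{eq:vpd} the left-hand side is then $v_p\bigl((q+t-1)!\bigr)$. Since $(q-1)!$ divides $(q+t-1)!$, this is at least $v_p\bigl((q-1)!\bigr)$, which proves (1).

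For part (2) I will write $q-1=m(p-1)+r$ with $m=\lfloor\frac{q-1}{p-1}\rfloor$ and $0\le r<p-1$. I set $k=m-l+1$ and $N=q+t-1$. The hypothesis $p^k>q\ge 1$ gives $k\ge 1$. By \eqref{eq:vpd}, \eqref{eq:vpphi} and Legendre's formula \eqref{eq:lgd},
\[
v_p\!\left(\frac{N!}{\phi(t)}\right)=\frac{N-S_p(N)}{p-1}-\Bigl\lfloor\frac{t}{p-1}\Bigr\rfloor
\ \ge\ \frac{m(p-1)+r+t-S_p(N)}{p-1}-\frac{t}{p-1}
= m+\frac{r-S_p(N)}{p-1}.
\]
The key observation is that the left-hand side is an integer. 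It therefore suffices to show that the right-hand side is strictly greater than $l-1=m-k$, i.e.\ that $S_p(N)<(p-1)k+r$.

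Next I will bound $S_p(N)$ with Lemma~\ref{lem:sp}. In the strict case $t<p^k-q$ we have $N\le p^k-2<p^k-1$. So $N$ has at most $k$ base-$p$ digits, each at most $p-1$, and $N$ is not $p^k-1$, which is the only number with at most $k$ digits all equal to $p-1$. Hence $S_p(N)\le (p-1)k-1<(p-1)k+r$, because $r\ge 0$. In the non-strict case $p-1\nmid q-1$ we have $r\ge 1$, and $N\le p^k-1$ gives $S_p(N)\le (p-1)k<(p-1)k+r$. In both cases the valuation exceeds $l-1$, so being an integer it is at least $l$.

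The only delicate point is this digit-sum bound. Lemma~\ref{lem:sp} is phrased with $\lfloor\log_p N\rfloor$, so for $N<p^k$ I will argue directly with "at most $k$ digits" to get $S_p(N)\le (p-1)k$, with equality only at $N=p^k-1$; this also covers $N$ having fewer than $k$ digits. The second place that needs care is remembering that the integrality of the valuation is what upgrades the strict inequality to $\ge l$. No other difficulties are expected.
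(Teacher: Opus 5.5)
Your proof is correct and follows essentially the same route as the paper: Legendre's formula together with \eqref{eq:vpphi}, the base-$p$ digit-sum bound for $N<p^{k}$, and the integrality of $v_p$ to upgrade ``$>l-1$'' to ``$\ge l$''. The differences are cosmetic. You absorb the endpoint case $p-1\nmid q-1$ into one strict inequality via $r\ge 1$, whereas the paper computes the valuation there to be exactly $l$ by noting that the two fractional parts sum to $1$. Your direct ``at most $k$ digits'' argument also sidesteps the misstated equality case in Lemma~\ref{lem:sp} and the degenerate value $N=0$.
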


\begin{proof}
(1) For $0\le t<p-1$, we have $\lfloor t/(p-1)\rfloor=0$, hence $v_p(\phi(t))=0$ by \eqref{eq:vpphi}. 
Thus
\[
v_p\!\left(\frac{(q+t-1)!}{\phi(t)}\right)=v_p\bigl((q+t-1)!\bigr)\ge v_p\bigl((q-1)!\bigr).
\]

(2) Using \eqref{eq:vpphi}, \eqref{eq:lgd} and \eqref{eq:vpd} we compute
\begin{align*}
v_p\!\left(\frac{(q+t-1)!}{\phi(t)}\right)
&= v_p\bigl((q+t-1)!\bigr)-\Bigl\lfloor\frac{t}{p-1}\Bigr\rfloor \\
&= \frac{q+t-1-S_p(q+t-1)}{p-1}-\Bigl\lfloor\frac{t}{p-1}\Bigr\rfloor \\
&= \frac{q-1-S_p(q+t-1)}{p-1}
   +\frac{t}{p-1}-\Bigl\lfloor\frac{t}{p-1}\Bigr\rfloor .
\end{align*}
If $t<p^{\lfloor\frac{q-1}{p-1}\rfloor -l+1}-q$, then $q+t-1<p^{\lfloor\frac{q-1}{p-1}\rfloor -l+1}-1$, so by Lemma~\ref{lem:sp}
\[
\frac{S_p(q+t-1)}{p-1}<\Bigl\lfloor\frac{q-1}{p-1}\Bigr\rfloor -l+1 .
\]
Consequently
\begin{align*}
v_p\!\left(\frac{(q+t-1)!}{\phi(t)}\right)
&> l-1+\frac{q-1}{p-1}-\Bigl\lfloor\frac{q-1}{p-1}\Bigr\rfloor
   +\frac{t}{p-1}-\Bigl\lfloor\frac{t}{p-1}\Bigr\rfloor \\
&\ge l-1 .
\end{align*}
Hence the left‑hand side is at least $l$.

Now assume $p-1\nmid q-1$ and $t=p^{\lfloor\frac{q-1}{p-1}\rfloor -l+1}-q$, then
$q+t-1=p^{\lfloor\frac{q-1}{p-1}\rfloor -l+1}-1$. 
By Lemma~\ref{lem:sp}, we have equality
\[
\frac{S_p(q+t-1)}{p-1}= \Bigl\lfloor\frac{q-1}{p-1}\Bigr\rfloor -l+1 .
\]
Moreover, because $p-1\nmid q-1$ and $t$ is chosen accordingly, one checks that
\[
\frac{q-1}{p-1}-\Bigl\lfloor\frac{q-1}{p-1}\Bigr\rfloor
+\frac{t}{p-1}-\Bigl\lfloor\frac{t}{p-1}\Bigr\rfloor =1 .
\]
Substituting these equalities into the expression above gives
\[
v_p\!\left(\frac{(q+t-1)!}{\phi(t)}\right)=l .
\]
This completes the proof. 
\end{proof}

Armed with these estimates, we can now state the main result of this section.

\begin{theorem}
\label{thm:dchern}
Let $p$ be a prime and $q$ a positive integer. 
Let $\eta$ be a complex vector bundle over a CW‑complex $X$ that is trivial over the $(2q-1)$-skeleton of $X$.
Then:
\begin{enumerate}[leftmargin=*,topsep=2pt,itemsep=2pt,parsep=2pt]
\item[(i)] For every $k$ with $q\le k<q+p-1$, the Chern class $c_k(\eta)$ is divisible by $p^{v_p((q-1)!)}$ modulo torsion.
\item[(ii)] If a positive integer $l$ satisfies
$p^{\lfloor\frac{q-1}{p-1}\rfloor -l+1}>q$, then for every $k$ with
$q\le k < p^{\lfloor\frac{q-1}{p-1}\rfloor -l+1}$
(or $q\le k \le p^{\lfloor\frac{q-1}{p-1}\rfloor -l+1}$ if $p-1\nmid q-1$),
$c_k(\eta)$ is divisible by $p^{l}$ modulo torsion.
\end{enumerate}
\end{theorem}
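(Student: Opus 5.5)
We will argue by induction on $k$, combining Newton's formula \eqref{eq:newton} with Adams' integrality theorem (Lemma~\ref{lem:adams}) and the valuation estimates of Lemma~\ref{lem:vp}. Throughout, "divisible by $p^{m}$" means divisible by $p^{m}$ in $H^{*}(X;\Z)$ modulo torsion. Equivalently, it means that $p^{-m}$ times the image in $H^{*}(X;\Q)$ is integral, where the set of integral classes is a $\Z$-lattice (the image of $H^*(X;\Z)$).

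\emph{Step 1: vanishing of low Chern classes.} Since $\eta$ is trivial over the $(2q-1)$-skeleton, the restriction $H^{2j}(X)\to H^{2j}(X^{(2q-1)})$ is injective for $2j\le 2q-2$. Hence $c_j(\eta)=0$ for $1\le j\le q-1$, by naturality. Newton's formula \eqref{eq:newton} then simplifies in the range $q\le k\le 2q-1$. Every product $\sigma_i s_{k-i}$ with $1\le i\le k-1$ involves either $\sigma_i$ with $i<q$ (which is zero) or $s_{k-i}$ with $k-i<q$. Such $s_{k-i}$ is a polynomial in $c_1,\dots,c_{k-i}$ without constant term, so it vanishes too. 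Therefore
\[
s_k(\eta)=(-1)^{k-1}k\,c_k(\eta)\qquad (q\le k\le 2q-1).
\]
Beyond $2q-1$ we would instead keep the cross terms $\pm c_i s_{k-i}$ with $i,k-i\ge q$ and handle them inductively. I expect the relevant ranges of $k$ to lie essentially within $k<2q$ in the applications, but the general case needs the induction below.

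\emph{Step 2: apply Adams.} Write $k=q+t$. Lemma~\ref{lem:adams} says that $\frac{\phi(t)}{(q+t)!}s_{q+t}(\eta)$ is integral. In the range of Step 1 this class equals $\pm\frac{\phi(t)}{(q+t-1)!}c_{q+t}(\eta)$. Hence $c_{q+t}(\eta)$ equals an integral class multiplied by $\frac{(q+t-1)!}{\phi(t)}$. Decompose this rational number as $p^{v}\cdot a/b$, where $v=v_p\bigl((q+t-1)!/\phi(t)\bigr)$ and $a,b$ are prime to $p$. Then $b\,c_{q+t}(\eta)=p^{v}a\,z$ with $z$ integral. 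Since $b$ is prime to $p$, choose $b'$ with $bb'\equiv1\pmod{p^{v}}$. Then $c_{q+t}=bb'c_{q+t}-(bb'-1)c_{q+t}$, and both terms are divisible by $p^{v}$ modulo torsion. So $c_{q+t}(\eta)$ is divisible by $p^{v}$ modulo torsion.

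\emph{Step 3: insert the estimates.} For (i), $0\le t<p-1$, and Lemma~\ref{lem:vp}(1) gives $v\ge v_p((q-1)!)$. For (ii), $k=q+t$ runs over exactly the range of Lemma~\ref{lem:vp}(2), which gives $v\ge l$. Note that the upper bound on $k$ in (ii) is $p^{\lfloor\frac{q-1}{p-1}\rfloor-l+1}$, while Lemma~\ref{lem:vp}(2) bounds $t$ by that number minus $q$. These agree exactly.

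\emph{Main obstacle.} The delicate step is justifying that the cross terms in Newton's formula do not spoil divisibility when $k\ge 2q$. This can happen in (ii), since $p^{\lfloor\frac{q-1}{p-1}\rfloor-l+1}$ may exceed $2q$. Here I would do strong induction on $k$. The identity
\[
(-1)^{k-1}k\,c_k=s_k-\sum_{i=q}^{k-q}(-1)^{i-1}c_i s_{k-i}
\]
expresses $k c_k$ through $s_k$ and products $c_i s_{k-i}$. Each factor $c_i$ with $i<k$ is already $p^{l}$-divisible by the induction hypothesis. Each $s_{k-i}$ is an integral class, as an integer polynomial in Chern classes. So every cross term is $p^{l}$-divisible. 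Combined with Adams' bound on $s_k/k$, this yields $p^{l}$-divisibility of $c_k$, provided the $p$-power in $k$ is absorbed. I would verify this by working with $s_k/k\cdot(k-1)!$ scaled as in Step 2. If that fails, I would restrict attention to $k<2q$, which suffices for the application.
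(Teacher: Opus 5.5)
\paragraph{Where the proposal is incomplete.} Your Steps 1--3 are correct for $q\le k\le 2q-1$, and they handle all of (i). You should say explicitly why Step 1 suffices there: if $q\le p$ then $v_p((q-1)!)=0$ and (i) is trivial, and if $q>p$ then $k<q+p-1\le 2q-2$.

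The genuine gap is part (ii) for $k\ge 2q$. The statement really requires this range, because $p^{\lfloor\frac{q-1}{p-1}\rfloor-l+1}$ is typically much larger than $2q$. Your fallback is also not available for the application. In the proof of Theorem~\ref{thm:main} one may take $q=n$, so $c_{2q}(\eta_1)$ appears in \eqref{eq:chern}. Moreover, Theorem~\ref{thm:som}(b) allows $n>q$, which needs $k$ up to $n+q>2q$.

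\paragraph{Why the sketched induction does not close.} Solving Newton's identity for $c_k$ means dividing by $k$. Here $v_p(k)$ can be as large as $\lfloor\frac{q-1}{p-1}\rfloor-l$. The cross terms $c_i s_{k-i}$ are only known to have valuation at least $l$ plus the Adams bound for $s_{k-i}$, and that bound can be far smaller than $v_p(k)$.

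Concretely, take $p=2$, $l=1$, $q=10$, $k=256$; this is allowed since $256<2^9$. The cross term $c_{129}s_{127}$ has only guaranteed valuation $1+3=4$, because Adams gives $v_2(s_{127})\ge v_2(127!)-v_2(\phi(117))=120-117=3$. To retain one factor of $2$ after dividing by $k$, you would need valuation $v_2(256)+1=9$. Inducting only on ``$c_i$ is $p^l$-divisible'' therefore cannot work.

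\paragraph{The missing idea.} Avoid iterated division by using the closed formula \eqref{eq:uchern}. Since $s_i(\eta)=0$ for $i<q$, it writes $c_k(\eta)$ as a sum of terms
\[
\pm\frac{1}{\prod_i m_i!}\prod_{t}\Bigl(\frac{(q+t-1)!}{\phi(t)}\,z_{q+t}\Bigr)^{m_{q+t}}.
\]
By Lemma~\ref{lem:vp}(2), each factor $\frac{(q+t-1)!}{\phi(t)}$ has $p$-adic valuation at least $l$. Now set $M=\sum m_i$ and write $\frac{1}{\prod m_i!}=\frac{M!}{\prod m_i!}\cdot\frac{1}{M!}$, where the first factor is an integer. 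Legendre's formula gives $v_p(M!)=\frac{M-S_p(M)}{p-1}\le\frac{M-1}{p-1}$. Hence each term has valuation at least $lM-\frac{M-1}{p-1}\ge l$, since $l\ge 1\ge\frac{1}{p-1}$.

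This is exactly how the paper proves (ii). The point is that the only denominator to control is $M!$, whose valuation grows strictly slower than $M$, rather than the accumulated divisions by $k$ that arise in the Newton recursion.
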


\begin{proof}
Because $\eta$ is trivial over the $(2q-1)$-skeleton, we have $c_i(\eta)=0$ for $1\le i\le q-1$; then Newton’s formulas \eqref{eq:newton} yield
\begin{equation}
\label{eq:si}
s_i(\eta)=0,\qquad 1\le i\le q-1 .
\end{equation}
Adams’ lemma \ref{lem:adams} provides integral classes $z_{q+t}\in H^*(X;\Z)$ such that
\[
\frac{\phi(t)}{(q+t)!}\,s_{q+t}(\eta)=z_{q+t}\quad\text{in }H^*(X;\Q),
\]
hence
\begin{equation}
\label{eq:sqp}
\frac{s_{q+t}(\eta)}{q+t}= \frac{(q+t-1)!}{\phi(t)}\,z_{q+t}
\end{equation}
for every $t\ge 0$.

Now fix $k\ge q$.  Using the expression \eqref{eq:uchern} together with \eqref{eq:si} and \eqref{eq:sqp} we obtain
\begin{align}
\label{eq:ck}
(-1)^k c_k(\eta)
&= \sum_{\substack{qm_q+\cdots+km_k=k\\ m_i\ge 0}}
\frac{(-1)^{m_q+\cdots+m_k}}{m_q!\,\cdots\, m_k!}
\prod_{i=q}^{k}\Bigl(\frac{s_i(\eta)}{i}\Bigr)^{m_i} \notag \\
&= \sum_{\substack{qm_q+\cdots+km_k=k\\ m_i\ge 0}}
\frac{(m_q+\cdots+m_k)!}{m_q!\,\cdots\, m_k!}\;
\frac{(-1)^{m_q+\cdots+m_k}}{(m_q+\cdots+m_k)!}
\prod_{t=0}^{k-q}
\Bigl(\frac{(q+t-1)!}{\phi(t)}\,z_{q+t}\Bigr)^{m_{q+t}} .
\end{align}

\begin{enumerate}[leftmargin=*]
\item[(i)] If $q\le p$ then $v_p((q-1)!)=0$ and the statement is trivial. 
Assume $q>p$, so $q+p-1<2q-1$. 
For $q\le k<q+p-1$ only the term with $m_k=1$ and all other $m_i=0$ survives in \eqref{eq:ck}, because any other combination would require a product involving $s_i(\eta)$ with $i<q$, which vanish. 
Thus
\[
(-1)^k c_k(\eta)= -\frac{s_k(\eta)}{k}= -\frac{(k-1)!}{\phi(k-q)}\,z_k .
\]
Applying part (1) of Lemma~\ref{lem:vp} with $t=k-q$ (note $0\le t<p-1$) gives
$v_p\bigl(\frac{(k-1)!}{\phi(k-q)}\bigr)\ge v_p((q-1)!)$, hence $c_k(\eta)$ is divisible by $p^{v_p((q-1)!)}$ modulo torsion.

\item[(ii)] Because the multinomial coefficients $\frac{(m_q+\cdots+m_k)!}{m_q!\,\cdots\, m_k!}$ are integers, it suffices to show that for every non‑negative integers $m_i$ satisfying $qm_q+\cdots+km_k=k$ we have
\[
v_p\!\left(\frac{1}{(m_q+\cdots+m_k)!}
\prod_{t=0}^{k-q}
\Bigl(\frac{(q+t-1)!}{\phi(t)}\Bigr)^{m_{q+t}}\right)\ge l .
\]
The condition on $k$ implies $k-q < p^{\lfloor\frac{q-1}{p-1}\rfloor -l+1}-q$ (or $\le$ in the exceptional case). 
By part (2) of Lemma~\ref{lem:vp}, for each $t$ with $0\le t\le k-q$ we have
\[
v_p\!\left(\frac{(q+t-1)!}{\phi(t)}\right)\ge l .
\]
Now use the valuation rules \eqref{eq:vpt}, \eqref{eq:vpd} and Legendre’s formula \eqref{eq:lgd}:
\begin{align*}
& v_p\!\left(\frac{1}{(m_q+\cdots+m_k)!}
\prod_{t=0}^{k-q}
\Bigl(\frac{(q+t-1)!}{\phi(t)}\Bigr)^{m_{q+t}}\right) \\
= \quad & \sum_{t=0}^{k-q} m_{q+t}\,v_p\!\left(\frac{(q+t-1)!}{\phi(t)}\right)
   - v_p\bigl((m_q+\cdots+m_k)!\bigr) \\
\ge \quad & l\,(m_q+\cdots+m_k) -\frac{m_q+\cdots+m_k}{p-1}
   +\frac{S_p(m_q+\cdots+m_k)}{p-1} .
\end{align*}
Since $l\ge 1\ge\frac1{p-1}$ and $m_q+\cdots+m_k\ge 1$, we have $S_p(m_q+\cdots+m_k)\ge 1$, hence
\begin{align*}
& v_p\!\left(\frac{1}{(m_q+\cdots+m_k)!}
\prod_{t=0}^{k-q}
\Bigl(\frac{(q+t-1)!}{\phi(t)}\Bigr)^{m_{q+t}}\right) \\
\ge \quad & \Bigl(l-\frac1{p-1}\Bigr)(m_q+\cdots+m_k)+\frac{S_p(m_q+\cdots+m_k)}{p-1} \\
\ge \quad & l -  \frac{1}{p-1} + \frac{1}{p-1} = l.
\end{align*}
This completes the proof. 
\end{enumerate}\end{proof}


\section{Non-existence of almost complex structures on sphere bundles}
\label{s:pf}
Armed with the divisibility results of Section \ref{s:dpcc}, we now study the existence of almost complex structures on sphere bundles.
Our main result is Theorem \ref{thm:som}, which gives a general criterion for non-existence.
As a corollary, we prove Theorem \ref{thm:main} announced in the introduction.

Recall from Section~\ref{s:intro} that for a prime $p$, we set
\[
\delta_p := v_p(2)=
\begin{cases}
1, & p=2,\\[2pt]
0, & p\text{ odd}.
\end{cases}
\]
For a closed oriented smooth manifold $M$, we denote by $\chi(M)$ its Euler class (or Euler characteristic, when no confusion can arise).

\begin{theorem}
\label{thm:som}
Let $M$ be a closed oriented smooth $2n$-manifold and let $\xi$ be a sphere bundle over $M$ with fibre $S^{2q}$. 
Assume that $\xi$ admits a cross section. 
If there exists a prime $p$ such that either
\begin{enumerate}[topsep=2pt,itemsep=2pt,parsep=2pt]
\item[(a)] $p>n+1$ and $v_p\bigl((q-1)!\bigr)\ge v_p(\chi(M))+\delta_p+1$, or
\item[(b)] $p^{\bigl\lfloor\frac{q-1}{p-1}\bigr\rfloor - v_p(\chi(M)) - \delta_p} - q > n$ 
(or $\ge n$ if $p-1\nmid q-1$),
\end{enumerate}
then the total space $E(\xi)$ does not admit an almost complex structure.
\end{theorem}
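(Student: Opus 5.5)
Suppose, for contradiction, that $E=E(\xi)$ has an almost complex structure, and write $\eta$ for $TE$ regarded as a complex bundle of rank $n+q$. Let $\pi\colon E\to M$ be the projection and $s\colon M\to E$ the cross section. The argument splits $\chi(E)=c_{n+q}(\eta)[E]$ into base and fibre contributions and compares with divisibility forced by Theorem~\ref{thm:dchern}.

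\emph{Step 1: cohomology and Euler characteristic.} The bundle has a section and even-dimensional fibres, so the Leray--Hirsch theorem applies (the Euler class of the vertical bundle restricts to twice a generator on each fibre, which gives a rational splitting). We get $H^*(E;\Q)\cong H^*(M;\Q)\otimes H^*(S^{2q};\Q)$ and $\chi(E)=2\chi(M)$. Let $u\in H^{2q}(E;\Z)$ be the Poincaré dual of $s(M)$, i.e.\ the Thom class of the normal bundle of the section pushed into $E$. Then $u$ restricts to the fibre generator, so $H^*(E;\Z)$ modulo torsion is a free $H^*(M)$-module on $1,u$.

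\emph{Step 2: pull back to a fibre-trivial piece.} $TE\cong\pi^*TM\oplus V$, where $V$ is the vertical bundle. Restrict $\eta$ to the complement of a tubular neighbourhood of $s(M)$. This complement is a $D^{2q}$-bundle over $M$, so the restriction there is governed by the base. The key idea is to find a class killing the base part. Consider the composite of $u$ (or the collapse $E\to \mathrm{Th}(\nu)$) with the virtual complex bundle $\eta-\pi^*(\eta|_{s(M)})$, whose stable class restricts trivially to the base.

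It is cleaner to work directly with a complex bundle trivial over the $(2q-1)$-skeleton. Take $\zeta:=\eta-\pi^*s^*\eta$ in $\widetilde K(E)$. This class vanishes on $s(M)$, and it is pulled back from $E/s(M)$, which is $(2q-1)$-connected because it is the Thom space of a rank-$2q$ bundle. Hence $\zeta$ is represented (after adding trivial bundles) by a bundle trivial on the $(2q-1)$-skeleton, and Theorem~\ref{thm:dchern} applies to $c_k(\zeta)$.

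\emph{Step 3: read off the top Chern class.} We have $c(\eta)=\pi^*c(s^*\eta)\cdot c(\zeta)$. For degree reasons $c_j(\zeta)$ lies in $u\cdot H^{2(j-q)}(M)$ modulo torsion, and nonzero classes in $u\cdot H^*(M)$ occur only for $q\le j\le q+n$. Evaluating on $[E]$ expresses $2\chi(M)=\chi(E)$ as $\sum_j \langle c_{n+q-j}(s^*\eta)\,c_j(\zeta),[E]\rangle$, a sum of integers each divisible by the divisibility of the corresponding $c_j(\zeta)$. (One uses that $u\cdot x$ evaluated on $[E]$ equals $x[M]$, which is integral, so divisibility of $c_j(\zeta)$ by $p^l$ modulo torsion passes to these evaluations.)

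\emph{Step 4: apply Theorem~\ref{thm:dchern}.} Only $c_j(\zeta)$ with $q\le j\le q+n$ matter.
\begin{itemize}
\item In case (a), $p>n+1$ gives $q+n<q+p-1$, so by Theorem~\ref{thm:dchern}(i) every relevant $c_j(\zeta)$ is divisible by $p^{v_p((q-1)!)}$. Then $v_p(2\chi(M))\ge v_p((q-1)!)\ge v_p(\chi(M))+\delta_p+1$, a contradiction, provided $\chi(M)\neq 0$. If $\chi(M)=0$, the hypothesis is read with $v_p(0)=\infty$ and is vacuous.
\item In case (b), take $l=v_p(\chi(M))+\delta_p+1$. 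The hypothesis says exactly that $q+n$ lies in the admissible range of Theorem~\ref{thm:dchern}(ii), with the equality variant when $p-1\nmid q-1$. So every relevant $c_j(\zeta)$ is divisible by $p^l$, and $p^l\mid 2\chi(M)$ contradicts $v_p(2\chi(M))=l-1$.
\end{itemize}

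\emph{Main obstacle.} The delicate step is Step 2/3: producing a genuine bundle trivial over the $(2q-1)$-skeleton and verifying that $\chi(E)$ decomposes so that every term carries a factor $c_j(\zeta)$ with $j\ge q$. In particular, the pure base term $c_{n+q}(\pi^*s^*\eta)$ must vanish. It does vanish because it lives in $H^{2(n+q)}(M)=0$. I would first try to confirm this by restricting to the section and using naturality of Adams' Lemma~\ref{lem:adams} for the pullback from the Thom space $E/s(M)$.
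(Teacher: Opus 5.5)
Your proposal is correct and follows essentially the same route as the paper: you split the class as $\pi^*s^*\eta$ plus a class $\zeta$ pulled back from $\widetilde K(E,s(M))$, so $\zeta$ is trivial over the $(2q-1)$-skeleton, and expand $c_{n+q}$ by the Whitney formula so that every surviving term contains some $c_j(\zeta)$ with $q\le j\le n+q$; you then apply Theorem~\ref{thm:dchern} with $l=v_p(\chi(M))+\delta_p+1$ and compare with $\chi(E)=2\chi(M)$. The differences are cosmetic: you work with $TE$ directly (the trivial direction of Lemma~\ref{lem:cne}) and evaluate on $[E]$ rather than working in $H^{2(n+q)}(E;\Z)\cong\Z$, and your Leray--Hirsch/Poincar\'e-dual description of $c_j(\zeta)$ is not actually needed.
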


To prove Theorem~\ref{thm:som} we need a classical criterion for the existence of an almost complex structure (see Sutherland \cite[Theorem 1.1]{suld65} or Thomas \cite[Theorem 1.7]{thomascsv}).

\begin{lemma}\label{lem:cne}
A closed oriented smooth $2n$-manifold $N$ admits an almost complex structure iff it admits a stable almost complex structure $\tilde\eta\in\widetilde K(N)$ satisfying 
\[
c_n(\tilde\eta)=\chi(N).
\]
\end{lemma}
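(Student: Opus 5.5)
The proof is classical obstruction theory, so I would follow the argument of Thomas and Sutherland. An almost complex structure on $N$ is a lift of the classifying map $N\to BSO(2n)$ of $TN$ through $BU(n)\to BSO(2n)$, and a stable almost complex structure is a lift of the stable classifying map through $BU\to BSO$.

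\emph{Necessity.} Suppose $TN=r_N(\omega)$ for a complex $n$-plane bundle $\omega$. Then $\tilde\eta=\tilde\omega$ is a stable almost complex structure. Its top Chern class is $c_n(\omega)=e(r_N\omega)=e(TN)$, and $e(TN)$ evaluates on $[N]$ to $\chi(N)$.

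\emph{Sufficiency, destabilizing.} Let $\tilde\eta\in\widetilde K(N)$ have $\tilde r_N(\tilde\eta)=\widetilde{TN}$ and $c_n(\tilde\eta)=\chi(N)$. First I would destabilize. The map $BU(n)\to BU$ is $(2n+1)$-connected, so on the $2n$-dimensional complex $N$ the class $\tilde\eta$ is represented by a complex $n$-plane bundle $\omega$. The stable classes of $r_N(\omega)$ and $TN$ then agree.

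\emph{Matching $TN$ away from a point.} Let $N^\circ=N\setminus\mathrm{int}(D^{2n})$, which deformation retracts onto a $(2n-1)$-complex. The map $BSO(2n)\to BSO$ is $2n$-connected. Hence $r(\omega)|_{N^\circ}\cong TN|_{N^\circ}$, and $TN|_{N^\circ}$ carries a complex structure $J$ whose underlying complex bundle is $\omega|_{N^\circ}$.

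\emph{Extending over the top cell.} The remaining step is to extend $J$ over the top cell. The obstruction is a class in
\[
H^{2n}\bigl(N;\pi_{2n-1}(SO(2n)/U(n))\bigr)\cong \pi_{2n-1}(SO(2n)/U(n)).
\]
There is also freedom: $\omega$ can be changed on the top cell, within its stable class, by elements of $\pi_{2n-1}(U(n))$ that die in $\pi_{2n-1}(U)$.

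\emph{Main obstacle.} I expect the main work to be identifying this obstruction with the integer $\langle c_n(\omega)-e(TN),[N]\rangle$, up to a fixed normalization. The approach I would take is through the fibration $U(n)\to SO(2n)\to SO(2n)/U(n)$ together with the stable comparison. The resulting exact sequence
\[
\pi_{2n-1}U(n)\to\pi_{2n-1}SO(2n)\to\pi_{2n-1}(SO(2n)/U(n))\to\pi_{2n-2}U(n)
\]
should show the following. Once the stable lift is fixed, the residual obstruction is measured by the difference between the Euler class and the top Chern class on the top cell. The key facts I would use are that $e$ detects the unstable part of $\pi_{2n-1}SO(2n)$, and that $c_n$ of a bundle clutched by $\alpha\in\pi_{2n-1}U(n)$ is $\pm(n-1)!$ times the degree.

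The hypothesis $c_n(\omega)=\chi(N)=e(TN)$ should then kill the obstruction, possibly after modifying $\omega$ on the top cell within its stable class; this modification does not change $c_n$ modulo the relevant indeterminacy. The result is an almost complex structure on $N$ whose associated stable class is $\tilde\eta$.

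The delicate part is the parity- and $n$-dependent structure of $\pi_{2n-1}(SO(2n)/U(n))$. For this I would rely on the computations in Massey's paper or in Thomas \cite{thomascsv}, rather than redo them.
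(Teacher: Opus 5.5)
The paper gives no argument for this lemma; it simply quotes Sutherland [Theorem 1.1] and Thomas [Theorem 1.7]. Your necessity direction is fine, and so are the first two sufficiency steps: destabilizing $\tilde\eta$ to $\omega$, and identifying $r(\omega)$ with $TN$ over $N^\circ$. The gap is the decisive step, which you never carry out: you only say the obstruction in $\pi_{2n-1}(SO(2n)/U(n))$ \emph{should} vanish. The mechanism you offer for killing it does not exist. Since $U(n+1)/U(n)=S^{2n+1}$, the map $\pi_{2n-1}U(n)\to\pi_{2n-1}U$ is an isomorphism, so no nonzero element of $\pi_{2n-1}U(n)$ dies stably. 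Equivalently, as you observed yourself, $[N,BU(n)]\to[N,BU]$ is bijective, so $\omega$ cannot be altered within its stable class. As written, sufficiency rests on an unproved expectation plus a deferred computation of $\pi_{2n-1}(SO(2n)/U(n))$.

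The missing idea is that you do not need $SO(2n)/U(n)$ at all: compare the oriented real bundles $r(\omega)$ and $TN$ directly.
\begin{enumerate}
\item Choose $\psi\colon r(\omega)|_{N^\circ}\cong TN|_{N^\circ}$ as a destabilization of the restriction of a global stable oriented isomorphism. Such an isomorphism exists: compose with a reflection of a trivial summand if necessary. The destabilization is possible because $SO(2n)\to SO$ is $(2n-1)$-connected and $N^\circ$ is $(2n-1)$-dimensional.
\item With this choice, the obstruction $d\in\pi_{2n-1}SO(2n)$ to extending $\psi$ over the top cell dies in $\pi_{2n-1}SO$.
\item By the exact sequence of $SO(2n)\to SO(2n+1)\to S^{2n}$, $d=k\,\partial\iota_{2n}$. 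Here $\partial\iota_{2n}$ is the clutching map of $TS^{2n}$, whose Euler number is $2$.
\item Additivity of the Euler class under the pinch map $N\to N\vee S^{2n}$ gives $\langle e(TN)-c_n(\omega),[N]\rangle=\pm 2k$.
\item The hypothesis therefore forces $k=0$, so $\psi$ extends and $TN\cong r(\omega)$.
\end{enumerate}

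This is your own remark that ``$e$ detects the unstable part'', used directly. The $n$-dependent structure of $\pi_{2n-1}(SO(2n)/U(n))$ never enters, and the fact about $c_n$ of $U(n)$-clutchings is irrelevant. All that is used is that $H^{2n}(N;\Z)\cong\Z$ has no $2$-torsion; for general complexes this is exactly Thomas's extra hypothesis. Finally, as in your necessity step, the statement must be read with $\chi(N)$ meaning $e(TN)$ and the almost complex structure inducing the given orientation.
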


Notice that $\dim E(\xi)=2(n+q)$. 
In view of Lemma~\ref{lem:cne}, our strategy for proving Theorem~\ref{thm:som} is to show that for any stable complex vector bundle over $E(\xi)$, its $(n+q)$-th Chern class can never equal the Euler class of $E(\xi)$.

\begin{proof}[Proof of Theorem \ref{thm:som}]
Denote by $\pi\colon E(\xi)\to M$ the projection and by $s\colon M\to E(\xi)$ a chosen section.
\begin{equation*}
\begin{split}
\xymatrix{
S^{2q}~\ar@{^{(}->}[r]^-{j} & E(\xi) \ar[d]^{\pi} \\
& M \ar@/^/[u]^-{s} }
\end{split}
\end{equation*}
Via $s$, we may regard $M$ as a subspace of $E(\xi)$. 
From the long exact sequence of reduced $K$-theory for the pair $(E(\xi),M)$ we obtain a split short exact sequence
\[
\xymatrix{
0 \ar[r] & \widetilde K(E(\xi),M) \ar[r]^-{j^{\ast}} & \widetilde K(E(\xi)) \ar[r]^-{s^{\ast}} & \widetilde K(M) \ar[r] \ar@/^/[l]^{\pi^{\ast}} & 0,
}
\]
where $j^\ast$, $s^\ast$ and $\pi^\ast$ are the induced homomorphisms. 
Consequently, every stable complex vector bundle $\tilde\eta\in\widetilde K(E(\xi))$ can be written uniquely as
\[
\tilde\eta = \eta_1 + \eta_2,\qquad 
\eta_1 = \tilde\eta - \pi^\ast\bigl(s^\ast(\tilde\eta)\bigr),\quad 
\eta_2 = \pi^\ast\bigl(s^\ast(\tilde\eta)\bigr).
\]

By construction $c_k(\eta_2)=0$ for $k>n$, hence
\begin{equation}
\label{eq:chern}
c_{n+q}(\tilde\eta)=c_{n+q}(\eta_1+\eta_2)=\sum_{k=q}^{n+q} c_k(\eta_1)\,c_{\,n+q-k}(\eta_2).
\end{equation}
Moreover, $s^\ast(\eta_1)=0$, so $\eta_1$ lies in the image of $j^\ast$. 
Since $E(\xi)/M$ is $(2q-1)$-connected, $\eta_1$ is trivial over the $(2q-1)$-skeleton of $E(\xi)$.

Now assume that a prime $p$ satisfies either condition (a) or (b) of the theorem. 
Applying Theorem~\ref{thm:dchern} to $\eta_1$ we obtain that every Chern class $c_k(\eta_1)$ with $q\le k\le n+q$ is divisible (modulo torsion) by $p^{\,v_p(\chi(M))+\delta_p+1}$. 
Insert this information into \eqref{eq:chern}: each term on the right‑hand side acquires the same factor, therefore
\begin{equation}
\label{eq:cd}
c_{n+q}(\tilde\eta)\ \text{is divisible by}\ p^{\,v_p(\chi(M))+\delta_p+1}.
\end{equation}

On the other hand, because $\xi$ is an $S^{2q}$-bundle, the Euler characteristic of the total space satisfies
\[
\chi(E(\xi))=\chi(S^{2q})\,\chi(M)=2\,\chi(M),
\]
hence
\begin{equation}
\label{eq:vpxi}
v_p\bigl(\chi(E(\xi))\bigr)=v_p(\chi(M))+\delta_p .
\end{equation}
Comparing \eqref{eq:cd} with \eqref{eq:vpxi} we see that $c_{n+q}(\tilde\eta)$ can never equal $\chi(E(\xi))$; by Lemma~\ref{lem:cne} this means that $E(\xi)$ cannot admit an almost complex structure. 
\end{proof}

\begin{proof}[Proof of Theorem \ref{thm:main}]
Recall from the introduction that
\[
a(n)=
\begin{cases}
n+3, & 1\le n\le 2;\\
n+2, & 3\le n\le 5;\\
n,   & n\ge 6.
\end{cases}
\]
For $M=\C P^n$ we have $\dim M=2n$. 
Obstruction theory (cf. \cite[\S12]{ms74b}) shows that when $q\ge a(n)$ the sphere bundle $\xi_{n,q}$ admits a cross section. 
We shall verify that for such $q$ the hypotheses of Theorem~\ref{thm:som} are fulfilled; the primes that actually matter are $2$ and $3$.

\subsection*{The prime $p=2$}
Write $\nu=v_2(n+1)$. 
A number $n$ satisfies $v_2(n+1)=\nu$ iff there exists an integer $k\ge0$ such that $n=2^\nu(2k+1)-1=2^{\nu+1}k+2^\nu-1$.

Case $\nu\ge4$.
Then $n\ge2^\nu-1\ge2\nu+3\ge\nu+7$, so by definition $a(n)=n$. 
For a fixed $\nu$, the functions $2^x$, $2^{x-\nu-2}-x$ and $2^{x-\nu-2}-2x$ are strictly increasing on $[\nu+7,\infty)$. 
Consequently
\[
2^{n-\nu-2}-2n \ge 2^{2^\nu-\nu-3}-2^{\nu+1}+2 \ge 2^{\nu+1}-2^{\nu+1}+2 >0,
\]
and therefore for any $q\ge a(n)=n$,
\[
2^{q-\nu-2}-q \ge 2^{n-\nu-2}-n > n .
\]
By part (b) of Theorem~\ref{thm:som} (with $p=2$) the total space $E_{n,q}$ admits no almost complex structure.

Case $0\le\nu\le3$ and $n$ sufficiently large.
Define
\[
k_\nu=
\begin{cases}
3-\nu, & \nu=0,1,\\[2pt]
1,     & \nu=2,3,
\end{cases}
\qquad
n_\nu = 2^{\nu+1}k_\nu+2^\nu-1.
\]
One checks directly that $2^{n_\nu-\nu-2}-2n_\nu>0$. 
If $v_2(n+1)=\nu$ and $n\ge n_\nu$, then $n\ge6$ and $a(n)=n$. 
Again monotonicity gives
\[
2^{n-\nu-2}-2n \ge 2^{n_\nu-\nu-2}-2n_\nu >0,
\]
so for every $q\ge n$ we have $2^{q-\nu-2}-q > n$. 
Thus Theorem~\ref{thm:som}(b) again rules out an almost complex structure on $E_{n,q}$.

The remaining values of $n$ with $0\le\nu\le3$ and $n<n_\nu$ are limited to: $1\le n\le5$ and $n=7$.
We treat them individually.

$n=2,4,5$: Take $p=2$. 
One verifies that
\[
2^{a(n)-v_2(n+1)-2}-a(n) > n,
\]
and the function $2^{x-v_2(n+1)-2}-x$ is increasing for $x\ge a(n)$. 
Hence for all $q\ge a(n)$,
\[
2^{q-v_2(n+1)-2}-q \ge 2^{a(n)-v_2(n+1)-2}-a(n) > n,
\]
and Theorem~\ref{thm:som}(b) applies.

\subsection*{The prime $p=3$}
We now examine those $n$ not yet covered.

$n=1$:  Here $\chi(\C P^1)=2$, so $v_3(\chi(\C P^1))+\delta_3+1 = v_3(2)+1 = 0+1=1$. 
For any $q\ge a(1)=4$ we have $v_3((q-1)!)\ge v_3(3!)=1$, hence condition (a) of Theorem~\ref{thm:som} (with $p=3$) is satisfied. 
Therefore $E_{1,q}$ admits no almost complex structure for $q\ge4$.

$n=3$ and $n=7$:  In both cases $v_3(n+1)=0$. 
For $q\ge a(n)$ one checks that, except possibly when $n=3$ and $q=6$, we always have
\[
3^{\lfloor (q-1)/2\rfloor} - q > n .
\]
When $n=3$ and $q=6$, $(q-1)/2$ is not an integer, so the “$\ge n$” version of condition (b) applies and gives $3^{\lfloor 5/2\rfloor}-6 = 3^2-6 = 3 = n$. 
Thus Theorem~\ref{thm:som}(b) (with $p=3$) again shows that $E_{n,q}$ cannot be almost complex.

All possibilities have been covered, completing the proof of Theorem~\ref{thm:main}. 
\end{proof}






\bibliographystyle{amsplain}

%
%

\end{document}